\renewcommand{\geq}{\geqslant}
\renewcommand{\leq}{\leqslant}
\newcommand{\lik}{\lambda_1^{w_1}, \ldots, \lambda_k^{w_k}}
\newcommand{\rsp}{R^{\mathfrak{S}_n}}
\DeclareMathOperator{\lp}{lp}
\DeclareMathOperator{\lm}{lm}
\theoremstyle{plain}
\newtheorem{teo}{Theorem}[section]
\newtheorem{coro}[teo]{Corollary}
\newtheorem{prop}[teo]{Proposition}
\newtheorem{lemma}[teo]{Lemma}
\newtheorem*{conj}{Conjecture}
\newtheorem*{teostar}{Theorem}
\theoremstyle{definition}
\newtheorem{ex}[teo]{Example}
\title{Generators of truncated symmetric polynomials}
\author{Federico Galetto}
\address{Federico Galetto, McMaster University, 1280 Main St W, 407 Hamilton Hall, Hamilton, ON, L8S 4K1, Canada}
\email{galettof@math.mcmaster.ca}
\urladdr{http://math.galetto.org}
\date{\today}
\keywords{symmetric polynomial, positive characteristic, cohomology, classifying space, unitary group}
\subjclass[2010]{13A15, 55R35}
\begin{document}
\begin{abstract}
  Adem and Reichstein introduced the ideal of truncated symmetric
  polynomials to present the permutation invariant subring in the
  cohomology of a finite product of projective spaces. Building upon
  their work, I describe a generating set of the ideal of truncated
  symmetric polynomials in arbitrary positive characteristic, and
  offer a conjecture for minimal generators.
\end{abstract}

\maketitle

\tableofcontents

\section{Introduction}
\label{sec:introduction}

Let $\mathbb{F}$ be a field, and let $R=\mathbb{F}[x_1,\ldots,x_n]$ be the polynomial ring in $n$ variables with coefficients in $\mathbb{F}$. The symmetric group $\mathfrak{S}_n$ acts on $R$ by permuting the variables. Denote by $\rsp$ the invariant subring, i.e., the ring of symmetric polynomials. The ideal of truncated symmetric polynomials in $\rsp$ is defined by
\begin{equation*}
  I_{n,d} = (x_1^{d+1},\ldots,x_n^{d+1}) R \cap \rsp.
\end{equation*}

The ideal of truncated symmetric polynomials was introduced by A.~Adem
and Z.~Reichstein \cite{MR2769082} in the following geometric context.
Let $\mathbb{C}P^d$ be the complex projective $d$-space, and let
$BU(n)$ be the classifying space of the unitary group.
The symmetric group $\mathfrak{S}_n$ acts on the $n$-fold product $(\mathbb{C}P^d)^n$ by
permuting the factors. Consider the induced action on the cohomology ring
$H^* ((\mathbb{C}P^d)^n,\mathbb{F})$ and let
$H^* ((\mathbb{C}P^d)^n,\mathbb{F})^{\mathfrak{S}_n}$ be the invariant subring.
The two authors
show there is a map $(\mathbb{C}P^d)^n \to BU(n)$ such that the
induced map on cohomology restricts to a ring epimorphism
\begin{equation*}
  H^* (BU(n),\mathbb{F}) \longrightarrow H^* ((\mathbb{C}P^d)^n,\mathbb{F})^{\mathfrak{S}_n}.
\end{equation*}
The cohomology ring of $BU(n)$ can be identified with $\rsp$, and the
kernel of this map can be identified with $I_{n,d}$. Therefore
$\rsp / I_{n,d}$ and
$H^* ((\mathbb{C}P^d)^n,\mathbb{F})^{\mathfrak{S}_n}$ are isomorphic
as ungraded rings (they become isomorphic as graded rings if the
grading on $\rsp / I_{n,d}$ is stretched out by a factor of 2). This
result was used in the same paper to compute the cohomology of the
homotopy fiber for a natural fibration over $BU(n)$.

Adem and Reichstein determined a set of generators of $I_{n,d}$ when
the characteristic of $\mathbb{F}$ is zero and when it is strictly
bigger than $(n+1)/2$. Independently, A.~Conca, C.~Krattenthaler and
J.~Watanabe \cite{MR2542141} identified generators of $I_{n,d}$
working over the complex numbers (although their methods hold over any
field of characteristic strictly bigger than $n$).  In addition, the
ideal of truncated symmetric polynomials has generated interest in
connection to other topics, such as Lefschetz properties
\cite{MR2314722} and invariants of Poincar\'e duality algebras
\cite{MR3320235}.

In this paper, I describe a generating set of $I_{n,d}$ in arbitrary characteristic. Given a partition $\lambda$, let $m_\lambda \in \rsp$ denote the monomial symmetric polynomial associated with $\lambda$. The shorthand notation $(\ldots,a^b,\ldots)$ denotes a partition with the part $a$ repeated $b$ times. The main result reads as follows.
\begin{teostar}
  Let $t=\max\{i\in\mathbb{N}\mid p^i\leq n\}$ and let
  $q_0,\ldots,q_t\in\mathbb{N}_{>0}$ be such that,
  $\forall i\in\{0,\ldots,t\}$, $n=q_i p^i+r_i$ with $0\leq
  r_i<p^i$. For every $i\in\{0,\ldots,t\}$, define an ideal of $\rsp$
  \[J_{n,d,i}=(m_{((d+1)^{p^i})},\ldots,m_{((d+q_i)^{p^i})}).\]
  Then $I_{n,d}=J_{n,d,0}+J_{n,d,1}+\ldots +J_{n,d,t}$.
\end{teostar}

The next section contains a brief review of symmetric polynomials and some computational results. In section \ref{sec:trunc-symm-polyn}, I review the algebraic results of Adem and Reichstein. The proof of the main theorem is in section \ref{sec:generators-general}. Finally, in section \ref{sec:conj-minim-gener}, I present a conjecture for a minimal generating set of the ideal of truncated symmetric polynomials.

\section{Symmetric polynomials and partitions}
Let $\mathbb{F}$ be a field of characteristic $p$. For
$n\in\mathbb{N}_{>0}$, set $R=\mathbb{F}[x_1,\ldots,x_n]$, the
polynomial ring in $n$ indeterminates over $\mathbb{F}$.  The
symmetric group $\mathfrak{S}_n$ acts on $R$ by permuting the
variables.  Let $\rsp \subseteq R$ be
the invariant subring, i.e., the ring of symmetric polynomials.

Recall that a \emph{partition} is a sequence
\[\lambda=(\lambda_1,\lambda_2,\ldots,\lambda_r,\ldots)\]
of non-negative integers in non-increasing order:
\[\lambda_1\geq\lambda_2\geq\ldots\geq\lambda_r\geq \ldots\]
and containing only finitely many nonzero terms. We identify two such
sequences which differ only by a string of zeroes at the end. The
nonzero numbers $\lambda_i$ are called the \emph{parts} of $\lambda$;
the number of parts of $\lambda$ is called the \emph{length} of
$\lambda$ and denoted $l(\lambda)$.

For a partition $\lambda$ with $l(\lambda)\leq n$, define the
\emph{monomial symmetric polynomial} on $\lambda$ to be the polynomial,
\[m_\lambda=\sum x_1^{\alpha_1} \ldots x_n^{\alpha_n}\]
summed over all distinct permutations
$\alpha=(\alpha_1,\ldots,\alpha_n)$ of
$\lambda=(\lambda_1,\ldots,\lambda_n)$. The polynomials $m_\lambda$
are clearly symmetric. Moreover, if $\mathcal{L}_{\leq n}$ denotes the
set of all partitions with length smaller than or equal to $n$, then
$\{m_\lambda\mid \lambda\in\mathcal{L}_{\leq n}\}$ is a basis of
$\rsp$ as an $\mathbb{F}$-vector space \cite[\S{}I.2,
pp. 18-19]{MR1354144}. If $l(\lambda)>n$, then $m_\lambda=0$.

\begin{ex}
  Suppose $n=3$. Then
  \begin{align*}
    m_{(1,1)}&={}x_1x_2+x_1x_3+x_2x_3,\\
    m_{(2,1)}&={}x_1^2x_2+x_1^2x_3+x_1x_2^2+x_1x_3^2+x_2^2x_3+x_2x_3^2.
  \end{align*}
\end{ex}

\begin{ex}[Power sums]
  \[p_i=x_1^i+\ldots+x_n^i=m_{(i)}\]
\end{ex}

\begin{ex}[Elementary symmetric polynomials]
  \[e_i=\sum_{1\leq j_1<\ldots <j_i\leq n} x_{j_1}\ldots
    x_{j_i}=m_{(\underbrace{{\scriptstyle 1,\ldots, 1}}_{i\text{ times}})}\]
\end{ex}

The following statement contains a multiplication formula for the
polynomials $m_\lambda$. The proof follows directly from the
definitions.

\begin{prop}
  \label{prop:multiply}
  For any $\lambda,\mu\in\mathcal{L}_{\leq n}$,
  \[m_\lambda m_\mu=\sum_{\nu\in\mathcal{L}_{\leq n}} c_\nu m_\nu,\]
  where $c_\nu$ is the number of different ways to write
  \[(\nu_1,\ldots,\nu_n)=(\alpha_1,\ldots,\alpha_n)+(\beta_1,\ldots,\beta_n)\]
  with $(\alpha_1,\ldots,\alpha_n)$ a permutation of
  $\lambda=(\lambda_1,\ldots,\lambda_n)$ and
  $(\beta_1,\ldots,\beta_n)$ a permutation of
  $\mu=(\mu_1,\ldots,\mu_n)$.
\end{prop}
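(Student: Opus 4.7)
The plan is to expand the product directly and then collect terms by symmetry. First I write
\[m_\lambda m_\mu=\Bigl(\sum_{\alpha} x_1^{\alpha_1}\cdots x_n^{\alpha_n}\Bigr)\Bigl(\sum_{\beta} x_1^{\beta_1}\cdots x_n^{\beta_n}\Bigr)=\sum_{\alpha,\beta} x_1^{\alpha_1+\beta_1}\cdots x_n^{\alpha_n+\beta_n},\]
where $\alpha$ ranges over distinct permutations of $(\lambda_1,\ldots,\lambda_n)$ and $\beta$ ranges over distinct permutations of $(\mu_1,\ldots,\mu_n)$. Since each $x_i^{\alpha_i+\beta_i}$ has exponent vector $\alpha+\beta$ with entries in nonincreasing rearrangement equal to some partition $\nu\in\mathcal{L}_{\leq n}$, I can group the summands according to the partition $\nu$ associated with $\alpha+\beta$, obtaining an expression of the form $\sum_\nu \bigl(\sum_{\alpha+\beta\sim\nu} x^{\alpha+\beta}\bigr)$, where $\sim$ means ``rearranges to''.

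Next, I use that $m_\lambda m_\mu\in \rsp$, so its expansion in the basis $\{m_\nu\}$ must be of the form $\sum_\nu c_\nu m_\nu$ for some $c_\nu\in\mathbb{F}$. To identify $c_\nu$, I compare the coefficient of the single monomial $x_1^{\nu_1}\cdots x_n^{\nu_n}$ on both sides: on the right it equals $c_\nu$ (since $m_\nu$ contains $x^\nu$ exactly once and no other $m_{\nu'}$ with $\nu'\neq\nu$ does), while on the left it equals exactly the number of pairs $(\alpha,\beta)$ with $\alpha+\beta=(\nu_1,\ldots,\nu_n)$. Equating the two gives the stated formula.

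There is no real obstacle here; the only subtle point is the bookkeeping that each monomial $x^\nu$ arises with multiplicity equal to the number of decompositions $\nu=\alpha+\beta$, and this follows directly from the distributive expansion. Symmetry of $m_\lambda m_\mu$ ensures that reading off the coefficient of $x^\nu$ (rather than of any other permutation of $\nu$) suffices to recover $c_\nu$.
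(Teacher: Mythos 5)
The paper states this proposition without proof, treating it as a standard fact about monomial symmetric polynomials (in the spirit of Macdonald's book). Your argument is the standard direct one and is correct: expand distributively, observe the product is symmetric and hence a linear combination of the $m_\nu$, and read off $c_\nu$ by comparing the coefficient of the single monomial $x^\nu$, which appears once in $m_\nu$ and in no other $m_{\nu'}$. You also correctly identify the one subtlety worth flagging, namely that symmetry lets you extract $c_\nu$ from a single representative monomial rather than from the whole orbit.
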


\begin{ex}\label{exprod}
  Suppose $n=3$. Then
  \[m_{(1,1)} m_{(2,1)}=m_{(3,2)}+2m_{(3,1,1)}+2m_{(2,2,1)},\] since
  \begin{align*}
    (3,2,0) &= (1,1,0)+(2,1,0),\\
    (3,1,1) &= (1,1,0)+(2,0,1)=(1,0,1)+(2,1,0),\\
    (2,2,1) &= (0,1,1)+(2,1,0)=(1,0,1)+(1,2,0).
  \end{align*}
\end{ex}

Let $\lambda_1, \ldots ,\lambda_k$ be integers such that
\[\lambda_1>\lambda_2>\ldots>\lambda_k>0\]
and let $w_1, \ldots ,w_k$ be positive integers. Denote by
$(\lambda_1^{w_1}, \ldots, \lambda_k^{w_k})$ the partition $\lambda$
having $w_i$ parts equal to $\lambda_i$ for every
$i\in\{1,\ldots,k\}$. The number $w_i$ will be referred to as the
\emph{multiplicity} of $\lambda_i$ in $\lambda$. In particular,
$\lambda_1$ will be called the \emph{leading part} of $\lambda$,
denoted by $\lp(\lambda)$, and $w_1$ will be called the \emph{leading
  multiplicity} of $\lambda$, denoted by $\lm(\lambda)$. Notice
$l(\lambda)=w_1+\ldots +w_k$.

We provide here a list of formulas that will be employed later on. The
first one is a classical result by Newton (see \cite{MR1186460}).

\begin{prop}[Newton's identities] For any $s>n$,
  \[m_{(s)}=\sum_{j=1}^n (-1)^{j-1} m_{(1^j)} m_{(s-j)}.\]
\end{prop}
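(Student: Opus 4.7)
The plan is to reduce the identity to a statement about each individual variable $x_i$, since the formula is really Newton's identity $p_s=\sum_{j=1}^n(-1)^{j-1}e_j\,p_{s-j}$ expressed in the $m_\lambda$ notation (I read the right-hand side as $m_{(s-j)}$, since the printed $m_{(s-n)}$ does not depend on $j$ and gives the wrong formula). The key observation is that each of $x_1,\ldots,x_n$ is a root of the monic polynomial
\[
P(T)=\prod_{i=1}^n (T-x_i)=T^n-e_1T^{n-1}+e_2T^{n-2}-\ldots+(-1)^n e_n,
\]
whose coefficients are (up to sign) the elementary symmetric polynomials $e_j=m_{(1^j)}$.

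From $P(x_i)=0$ I would first derive, for each $i$, the relation
\[
x_i^n=\sum_{j=1}^n (-1)^{j-1} e_j\, x_i^{n-j}.
\]
For any integer $s>n$, I would then multiply this identity by $x_i^{s-n}$ to obtain
\[
x_i^s=\sum_{j=1}^n (-1)^{j-1} e_j\, x_i^{s-j},
\]
an equation that holds in $R$ for each $i=1,\ldots,n$ individually.

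Finally, I would sum these equalities over $i$ from $1$ to $n$. The left-hand side becomes $p_s=x_1^s+\ldots+x_n^s=m_{(s)}$, while on the right-hand side the factors $e_j=m_{(1^j)}$ are constant in $i$ and come out of the sum, leaving $\sum_i x_i^{s-j}=p_{s-j}=m_{(s-j)}$. This yields
\[
m_{(s)}=\sum_{j=1}^n (-1)^{j-1} m_{(1^j)}\, m_{(s-j)},
\]
the desired formula.

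No step here is genuinely an obstacle: the argument is purely algebraic, valid over any field (so characteristic $p$ causes no trouble), and uses only the definitions of $e_j$ and $p_s$ together with the factorization of $P(T)$. The only delicate point is cosmetic, namely to confirm that the intended right-hand side reads $m_{(s-j)}$ rather than $m_{(s-n)}$; with that correction, the proof is a three-line manipulation.
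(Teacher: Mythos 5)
Your reading of the statement is right: the printed $m_{(s-n)}$ is a typo for $m_{(s-j)}$, as the usage inside the proof of Proposition~\ref{AR} confirms, where the formula is applied in the form $m_{(s+n+1)}=\sum_{j=1}^n (-1)^{j-1} m_{(1^j)} m_{(s+n+1-j)}$. The paper gives no proof of this proposition; it simply cites Macdonald, \S I.2, (2.11$'$), where Newton's identities are obtained by manipulating the generating function $E(t)=\prod_i(1+x_it)=\sum_r e_r t^r$ and its logarithmic derivative, a computation carried out in infinitely many variables and then specialized by noting that $e_j=0$ for $j>n$. Your proof is different and more elementary: each $x_i$ is a root of $\prod_{i}(T-x_i)=\sum_{j=0}^n(-1)^je_jT^{n-j}$, so $x_i^n=\sum_{j=1}^n(-1)^{j-1}e_jx_i^{n-j}$; multiplying by $x_i^{s-n}$ and summing over $i$ gives $p_s=\sum_{j=1}^n(-1)^{j-1}e_jp_{s-j}$, i.e.\ $m_{(s)}=\sum_{j=1}^n(-1)^{j-1}m_{(1^j)}m_{(s-j)}$. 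This argument is self-contained, valid over any field, and uses the hypothesis $s>n$ exactly where it is needed (to guarantee $s-j\geq 1$ for all $j\leq n$, so every $p_{s-j}$ is a genuine power sum and not $p_0=n$). It establishes precisely the finite-$n$ form of Newton's identity that the paper needs, without a detour through the generating-function identity in infinitely many variables. No gaps; the proof is correct.
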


\begin{ex}
  Let $n=3$ and $s=4$. Then
  \begin{equation*}
    m_{(4)} = m_{(1)} m_{(3)} - m_{(1,1)} m_{(2)} + m_{(1,1,1)} m_{(1)}.
  \end{equation*}
\end{ex}

\begin{lemma}\label{comp1}
  Let $\lambda=(\lik)\in\mathcal{L}_{\leq n}$, with $k>1$. Then
  \[m_{(\lambda_1^{w_1})} m_{(\lambda_2^{w_2}, \ldots,
      \lambda_k^{w_k})}=m_{\lambda}+\sum_{\substack{\lp(\mu) >
        \lambda_1\\\lm(\mu)=w_1\\l(\mu)<l(\lambda)}} a_\mu
    m_{\mu}+\sum_{\substack{\lp(\nu) > \lambda_1\\\lm(\nu)<w_1}} b_\nu
    m_{\nu},\] for some $a_\mu,b_\nu\in\mathbb{F}$.
\end{lemma}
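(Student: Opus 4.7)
The plan is to apply the multiplication formula from the preceding proposition and classify the partitions $\nu$ that can occur with nonzero coefficient $c_\nu$ in the product. Each such $\nu$ arises as the coordinatewise sum $\alpha+\beta$ where $\alpha$ is a permutation of $(\lambda_1,\ldots,\lambda_1,0,\ldots,0)$ (with $m_1$ copies of $\lambda_1$ followed by $n-m_1$ zeros) and $\beta$ is a permutation of $(\lambda_2^{m_2},\ldots,\lambda_k^{m_k})$ padded out to length $n$ with zeros. The classification will mirror the three pieces of the stated decomposition: the distinguished term $m_\lambda$, the first sum with $lm(\mu)=m_1$ and $l(\mu)<l(\lambda)$, and the second sum with $lm(\nu)<m_1$.

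The key structural observation is that each $\alpha_i\in\{0,\lambda_1\}$, while each $\beta_i\in\{0,\lambda_2,\ldots,\lambda_k\}$ and so $\beta_i<\lambda_1$. Hence any coordinate of $\alpha+\beta$ that is $\geq\lambda_1$ must sit at one of the $m_1$ positions where $\alpha_i=\lambda_1$. This immediately gives $lp(\nu)\geq\lambda_1$ (at least one such position exists, contributing a part $\geq\lambda_1$) and $lm(\nu)\leq m_1$ (the leading part can appear at most as many times as there are parts $\geq\lambda_1$). Partitions $\nu$ with $lm(\nu)<m_1$ therefore land in the second sum with no further analysis needed.

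It remains to treat the case $lm(\nu)=m_1$. Here all $m_1$ positions with $\alpha_i=\lambda_1$ must produce the same value $\lambda_1+c$ in $\nu$, forcing a common value $c\geq 0$ for $\beta_i$ at these positions. If $c=0$, then $\nu$ is obtained by concatenating $m_1$ copies of $\lambda_1$ with the parts of $(\lambda_2^{m_2},\ldots,\lambda_k^{m_k})$, so $\nu=\lambda$; moreover a quick check shows that the decomposition is unique (the first $m_1$ coordinates of $\lambda$ can only come from $\alpha_i=\lambda_1,\beta_i=0$, after which $\beta$ is forced), so $c_\lambda=1$ and this produces exactly the term $m_\lambda$. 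If $c>0$, the $m_1$ positions each produce a part $\lambda_1+c>\lambda_1$, and because at all $m_1$ of these positions both $\alpha_i$ and $\beta_i$ are nonzero, a count of nonzero coordinates yields $l(\nu)=l(\lambda)-m_1<l(\lambda)$, placing $\nu$ in the first sum.

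I do not anticipate a real obstacle; the argument is essentially careful bookkeeping with the multiplication formula. The only delicate step is the case split above when $lm(\nu)=m_1$, where one must verify both that $\nu=\lambda$ is the unique partition of length $l(\lambda)$ that can arise and that every other possibility forces a strict drop in length.
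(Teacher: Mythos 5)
Your proposal is correct and follows essentially the same strategy as the paper's proof: use the multiplication formula, observe that coordinates $\geq\lambda_1$ in $\alpha+\beta$ can occur only at the $m_1$ positions where $\alpha_i=\lambda_1$ (giving $lp\geq\lambda_1$ and $lm\leq m_1$), verify that the partition $\lambda$ itself arises from a unique decomposition, and then show that any other $\nu$ with $lm(\nu)=m_1$ must have all $m_1$ of those positions take a strictly larger common value, forcing the length to drop by $m_1$. The paper phrases the last step by explicitly constructing the partitions $(\,(\lambda_1+\lambda_j)^{m_1},\lambda_2^{m_2},\ldots,\lambda_j^{m_j-m_1},\ldots,\lambda_k^{m_k}\,)$ and computing their length, while you phrase it via an inclusion--exclusion count of nonzero coordinates; these are the same observation in different clothing.
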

\begin{proof}
  Expanding the product on the left hand side in the basis of monomial
  symmetric polynomials, we obtain
  \[m_{(\lambda_1^{w_1})} m_{(\lambda_2^{w_2}, \ldots,
      \lambda_k^{w_k})} = \sum_{\theta \in \mathcal{L}_{\leq n}}
    c_\theta m_\theta,\]
  for some $c_\theta \in \mathbb{F}$. We proceed to identify the
  partitions $\theta$ for which the coefficient $c_\theta$ may be
  non-zero.
  
  Consider first the case when $\theta = \lambda$. The only way to
  obtain the partition $\lambda$ as a sum of permutations of
  $(\lambda_1^{w_1})$ and $(\lambda_2^{w_2}, \ldots, \lambda_k^{w_k})$
  is
  \[(\lambda^{w_1},0,\ldots,0)+(0^{w_1},\lambda_2^{w_2}, \ldots,
    \lambda_k^{w_k},0,\ldots,0),\]
  where, abusing notation, $0^{w_1}$ means the entry $0$ is repeated
  $w_1$ times. Thus $c_{\theta} = 1$ by Proposition
  \ref{prop:multiply}.

  Now suppose $\theta \neq \lambda$ and $c_\theta \neq 0$.  Since
  $\lambda_1>\ldots>\lambda_k>0$,
  $\lp (\theta) = \lambda_1 +\lambda_j$ for some
  $j\in\{2,\ldots,k\}$. Note that $\lambda_1 +\lambda_j > \lambda_1$,
  hence $\lp (\theta) > \lambda_1$.
  
  If there is an index $j\in\{2,\ldots,k\}$ such that $w_j\geq w_1$,
  then we have a partition
  \begin{equation*}
    \begin{split}
      \mu &= ((\lambda_1+\lambda_j)^{w_1}, \lambda_2^{w_2},\ldots,
      \lambda_j^{w_j-w_1},\ldots, \lambda_k^{w_k}) =\\
      &=(\lambda_1^{w_1},0,\ldots,0) +
      (\lambda_j^{w_1},\lambda_2^{w_2},\ldots,\lambda_j^{w_j-w_1},\ldots,\lambda_k^{w_k})
    \end{split}
  \end{equation*}
  Therefore, by Proposition \ref{prop:multiply}, the element $m_\mu$
  may appear with non-zero coefficient in our expansion.  Observe that
  $\lm (\mu) = w_1$ and $\mu$ has length
  \[w_1+w_2+\ldots +(w_j-w_1)+\ldots +w_k=\sum_{j=2}^k
    w_j<l(\lambda).\]
  Similarly, we may obtain other partitions $\mu$ with the same
  properties. We collect them all in a single summation as in the
  statement of the lemma.

  The remaining terms are of the form $b_\nu m_\nu$ where, as
  observed, $\lp (\nu) > \lambda_1$ and, in addition,
  $\lm (\nu) < w_1$. We collect them in a single summation to obtain
  the desired formula.
\end{proof}

\begin{ex}
  Let $n=6$ and $\lambda = (3,3,2,2,1)$. Then
  \begin{equation*}
    \begin{split}
      m_{(3,3)} m_{(2,2,2,1)} &= m_{(3,3,2,2,2,1)} +\\
      &\quad + m_{(5,5,2,1)} +\\
      &\quad + m_{(5,4,2,2)} + m_{(5,3,2,2,1)} + m_{(4,3,2,2,2)}.
    \end{split}
  \end{equation*}  
\end{ex}

\begin{lemma}\label{comp2}
  Let $\lambda=(\lik)\in\mathcal{L}_{\leq n}$. If $s$ is a positive
  integer such that $s+l(\lambda)\leq n$, then
  \[m_{(\lambda_1^s)} m_{\lambda}=\binom{s+w_1}{s}
    m_{(\lambda_1^{s+w_1}, \lambda_2^{w_2}, \ldots,
      \lambda_k^{w_k})}+\sum_{\substack{\lp(\mu) >
        \lambda_1\\\lm(\mu)=s\\l(\mu)=l(\lambda)}} a_\mu
    m_{\mu}+\sum_{\substack{\lp(\nu) > \lambda_1\\\lm(\nu)<s}} b_\nu
    m_{\nu},\] for some $a_\mu,b_\nu\in\mathbb{F}$.
\end{lemma}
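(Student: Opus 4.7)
The plan is to mirror the strategy of Lemma~\ref{comp1}: expand the product $m_{(\lambda_1^s)}m_\lambda$ via the multiplication formula of the preceding proposition and classify each resulting summand $m_\gamma$ according to $lp(\gamma)$, $lm(\gamma)$, and $l(\gamma)$. Since $m_{(\lambda_1^s)}$ corresponds to the partition $(\lambda_1^s,0^{n-s})$, every permutation $\alpha$ of it has entries in $\{0,\lambda_1\}$, so each $\gamma=\alpha+\beta$ (with $\beta$ a permutation of $\lambda$) has entries in the finite set $\{0,\lambda_1,\ldots,\lambda_k,\,2\lambda_1,\,\lambda_1+\lambda_2,\ldots,\lambda_1+\lambda_k\}$. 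In particular $lp(\gamma)\geq\lambda_1$ for every term, matching the global constraint in the statement.

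I would first single out the partition $\mu_0=(\lambda_1^{s+m_1},\lambda_2^{m_2},\ldots,\lambda_k^{m_k})$, which is the unique $\gamma$ with $lp(\gamma)=\lambda_1$: any other arrangement places some $\lambda_1$ of $\alpha$ on top of a nonzero entry of $\beta$ and creates a value strictly exceeding $\lambda_1$. The hypothesis $s+l(\lambda)\leq n$ ensures $l(\mu_0)\leq n$, so $m_{\mu_0}$ is genuinely nonzero. To read off its coefficient, I would fix $\mu_0$ as an ordered $n$-tuple: the positions valued $\lambda_j$ with $j\geq 2$ or valued $0$ force $(\alpha_i,\beta_i)$ completely, while among the $s+m_1$ positions valued $\lambda_1$ one may freely choose any $s$ to house the $\lambda_1$'s of $\alpha$, yielding the coefficient $\binom{s+m_1}{s}$.

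Every remaining $\gamma$ satisfies $lp(\gamma)=\lambda_1+\lambda_j$ for some $j\in\{1,\ldots,k\}$ (interpreting $\lambda_1+\lambda_1$ as $2\lambda_1$), and the positions realizing this leading value are precisely those with $\alpha_i=\lambda_1$ and $\beta_i=\lambda_j$, so $lm(\gamma)\leq s$. The equality $lm(\gamma)=s$ holds exactly when all $s$ copies of $\lambda_1$ in $\alpha$ are paired with copies of a single $\lambda_j$ in $\beta$; for such $\gamma$, a direct length count gives $l(\gamma)=s+l(\lambda)-s=l(\lambda)$, so these partitions populate the first sum. All other $\gamma$ have $lm(\gamma)<s$ and fall into the second sum. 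The main obstacle is the combinatorial bookkeeping needed to verify that this trichotomy is both exhaustive and disjoint, and in particular that the length constraint $l(\mu)=l(\lambda)$ in the first sum follows automatically from $lp(\mu)\geq\lambda_1$ and $lm(\mu)=s$; this rests on the observation that the coefficient calculation in the previous step rules out $lm(\gamma)=s$ occurring together with $lp(\gamma)=\lambda_1$, forcing the overlap structure described above.
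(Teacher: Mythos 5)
Your proof is correct and takes essentially the same approach as the paper's, which simply invokes the Lemma~\ref{comp1} argument ("follows along the same lines") and then explains the binomial coefficient by the same choose-$s$-of-the-$s+m_1$ positions count that you give. You additionally spell out the classification of the remaining summands by leading multiplicity and justify the length constraint $l(\mu)=l(\lambda)$, details the paper leaves implicit.
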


\begin{proof}
  The proof follows from the same reasoning used to prove Lemma
  \ref{comp1}. The only aspect that requires further clarification is
  the binomial coefficient that appears in front of
  \begin{equation*}
    m_{(\lambda_1^{s+w_1}, \lambda_2^{w_2}, \ldots,
      \lambda_k^{w_k})}.
  \end{equation*}
  This polynomial arises whenever a permutation of
  $(\lambda_1^s,0,\ldots,0)$ is chosen that has $s$ of its first
  $s+w_1$ entries occupied by $\lambda_1$, and the remaining ones by
  $0$, while simultaneously a permutation of
  $(\lambda_1^{w_1}, \ldots, \lambda_k^{w_k})$ is chosen that has the
  same $s$ among its first $s+w_1$ entries occupied by $0$, and the
  remaining ones by $\lambda_1$. The number of times this occurs is
  the number of ways to choose $s$ entries among the first $s+w_1$.
\end{proof}

\begin{ex}
  Let $n=6$, $\lambda = (3^2,2,1)$ and $s=2$. Then
  \begin{equation*}
    \begin{split}
      m_{(3^2)} m_{(3^2,2,1)}
      &= 6 m_{(3^4,2,1)} +\\
      &\quad +m_{(6^2,2,1)} +\\
      &\quad +(m_{(6,5,3,1)} + m_{(6,4,3,2)} + 2 m_{(6,3^2,2,1)} +\\
      &\quad +m_{(5,4,3^2)} + 3 m_{(5,3^3,1)} + 3 m_{(4,3^3,2)}).
    \end{split}
  \end{equation*}
\end{ex}

We are interested in the case when the binomial coefficient appearing
in Lemma \ref{comp2} is invertible in $\mathbb{F}$. This is always the
case when $\mathbb{F}$ has characteristic zero. The following result
is helpful when $\mathbb{F}$ has characteristic $p>0$.

\begin{teo}[Lucas]
  Let $u$ and $v$ be non-negative integers, $p$ a prime, and
  \begin{align*}
    u &= u_t p^t + u_{t-1} p^{t-1} +\ldots + u_1 p+ u_0,\\
    v &= v_t p^t + v_{t-1} p^{t-1} +\ldots + v_1 p+ v_0,
  \end{align*}
  the base $p$ expansions of $u$ and $v$. Then
  \[ \binom{u}{v} \equiv \prod_{j=0}^t \binom{u_j}{v_j}\pmod p.\]
\end{teo}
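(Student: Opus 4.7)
The plan is to work in $\mathbb{F}_p[x]$ and exploit the Frobenius identity $(1+x)^p\equiv 1+x^p\pmod p$, sometimes called the ``freshman's dream.'' First, by induction on $j$, this yields $(1+x)^{p^j}\equiv 1+x^{p^j}\pmod p$ for every $j\geq 0$: the base case is the stated identity, and the inductive step follows by raising both sides to the $p$-th power and applying the identity one more time.

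Next, I would write $m=\sum_{j=0}^t m_j p^j$ in base $p$ and decompose
\[(1+x)^m=\prod_{j=0}^t\bigl((1+x)^{p^j}\bigr)^{m_j}\equiv\prod_{j=0}^t(1+x^{p^j})^{m_j}\pmod p.\]
Expanding each factor on the right via the ordinary binomial theorem and multiplying out gives
\[\prod_{j=0}^t(1+x^{p^j})^{m_j}=\sum_{(k_0,\ldots,k_t)}\left(\prod_{j=0}^t\binom{m_j}{k_j}\right)x^{\sum_j k_j p^j},\]
where the sum ranges over tuples with $0\leq k_j\leq m_j<p$.

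Finally, I would compare the coefficient of $x^n$ on both sides. The left hand side contributes $\binom{m}{n}$ directly. On the right, the constraint $\sum_j k_j p^j=n$ with $0\leq k_j<p$ admits, by uniqueness of the base $p$ expansion of $n$, the single solution $k_j=n_j$, so the surviving contribution is $\prod_{j=0}^t\binom{m_j}{n_j}$. The one subtlety I expect to be the trickiest to phrase cleanly is the case in which some digit $n_j$ exceeds $m_j$: then no tuple $(k_0,\ldots,k_t)$ with $k_j\leq m_j$ reproduces $n$, so the right hand side vanishes; simultaneously the factor $\binom{m_j}{n_j}$ on the product side is $0$, so the congruence still holds. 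Combining the two computations yields the claimed formula.
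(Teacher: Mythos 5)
Your argument is correct, and it is the classical generating-function proof of Lucas' theorem: lift everything to $\mathbb{F}_p[x]$, use the Frobenius identity $(1+x)^p \equiv 1+x^p \pmod p$ (and its iterate $(1+x)^{p^j}\equiv 1+x^{p^j}$) to factor $(1+x)^m$ according to the base-$p$ digits of $m$, expand, and read off the coefficient of $x^n$ on both sides using uniqueness of the base-$p$ representation. You also correctly address the degenerate case $n_j>m_j$, where both sides vanish. There is nothing in the paper to compare against here: the paper states Lucas' theorem without proof and simply refers the reader to Granville's survey for a demonstration, so your write-up supplies a proof where the paper chose to cite one. One small stylistic point worth making explicit if you expand this: the equality $(1+x)^m=\prod_{j=0}^t\bigl((1+x)^{p^j}\bigr)^{m_j}$ rests on $m=\sum_j m_j p^j$ and the exponent law, and the congruence that follows is a congruence of polynomials in $\mathbb{Z}[x]$ modulo $p$ (equivalently, an identity in $\mathbb{F}_p[x]$), so comparing coefficients is legitimate coefficient by coefficient.
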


A proof can be found in \cite[\S{}6]{MR1483922}.

\section{Truncated symmetric polynomials}
\label{sec:trunc-symm-polyn}
For a given non-negative integer $d$, consider the ideal
$(x_1^{d+1},\ldots,x_n^{d+1}) \subseteq R$.
Denote by $I_{n,d}$ the intersection
\[(x_1^{d+1},\ldots,x_n^{d+1})\cap \rsp.\]
Since $\rsp$ is a subring of $R$, $I_{n,d}$ is an ideal of $\rsp$.
Call $I_{n,d}$ the ideal of \emph{truncated symmetric polynomials}.

In this section, I will recall ideas and results previously presented
in \cite{MR2769082}.  For the convenience of the reader, all
statements and proofs will be phrased in the language and notation of
this paper.

The following proposition was recorded as a simple observation without
proof in the paper of Adem and Reichstein. I fill in the details of
the proof below.

\begin{prop}
  \label{span}\mbox{}

  \begin{enumerate}[label=\textup{(\alph*)}]
  \item\label{item:1} Let $\lambda\in\mathcal{L}_{\leq n}$. Then
    $m_\lambda \in I_{n,d}$ if and only if $\lp(\lambda)\geq d+1$.
  \item\label{item:2} The set
    \begin{equation*}
      \{m_\lambda\mid \lambda\in\mathcal{L}_{\leq n}, \lp(\lambda)\geq d+1\}
    \end{equation*}
    spans $I_{n,d}$ over $\mathbb{F}$.
  \end{enumerate}
\end{prop}
\begin{proof}
  \begin{enumerate}[label=\textup{(\alph*)}]
  \item Let $\lambda = (\lambda_1,\ldots,\lambda_n)$. If
    $\lp(\lambda)\geq d+1$, then $\lambda_1 \geq d+1$. Hence, for any
    permutation $(\alpha_1,\ldots,\alpha_n)$ of
    $(\lambda_1,\ldots,\lambda_n)$, there is some index $i$ such that
    $\alpha_i \geq d+1$ and
    $x_1^{\alpha_1} x_2^{\alpha_2} \ldots x_n^{\alpha_n} \in
    (x_1^{d+1},\ldots,x_n^{d+1})$. Therefore $m_\lambda \in I_{n,d}$.

    Vice versa, assume $m_\lambda \in I_{n,d}$. Then
    $m_\lambda \in (x_1^{d+1},\ldots,x_n^{d+1})$. Note that
    $\{x_1^{d+1},\ldots,x_n^{d+1}\}$ is the reduced Gr\"obner basis of
    the ideal $(x_1^{d+1},\ldots,x_n^{d+1})$ in the lexicographic term
    ordering on $R$. Therefore the leading term of $m_\lambda$ is
    divisible by $x_i^{d+1}$ for some $i$. Since the leading term of
    $m_\lambda$ is
    $x_1^{\lambda_1} x_2^{\lambda_2} \ldots x_n^{\lambda_n}$, it
    follows that $\lp (\lambda) = \lambda_1 \geq \lambda_i \geq d+1$.
  \item Let $0\neq g \in I_{n,d}$. Since the monomial symmetric polynomials
    form a basis of $\rsp$ over $\mathbb{F}$, $g$ can be written
    uniquely as a linear combination
    \begin{equation*}
      g = \sum_{\theta \in \mathcal{L}_{\leq n}} c_\theta m_\theta,
    \end{equation*}
    for some $c_\theta \in \mathbb{F}$. The set
      $\{\theta \in \mathcal{L}_{\leq n} \mid c_\theta \neq 0\}$
    is nonempty, finite, and totally ordered lexicographically. Let
    $\lambda$ be its maximum. Then
    $x_1^{\lambda_1} x_2^{\lambda_2} \ldots x_n^{\lambda_n}$ is the
    leading term of $g$. Reasoning as in part \ref{item:1}, deduce that
    $\lp (\lambda) \geq d+1$. Now repeat this process with
    $g - c_\lambda m_\lambda$ to show that the remaining $m_\theta$
    in the expression of $g$ also have leading part at least
    $d+1$. Therefore $g$ is in the span of the desired set.
  \end{enumerate}
\end{proof}

The next result corresponds to \cite[Lemma 5.2]{MR2769082}.

\begin{prop}\label{AR}
  Let $J_{n,d}=(m_{(d+1)},\ldots,m_{(d+n)})\subseteq\rsp$. Then $J_{n,d}$ contains
  every $m_\lambda$ with $\lp(\lambda)\geq d+1$ and
  $\lm(\lambda)\in\mathbb{F}^\times$.
\end{prop}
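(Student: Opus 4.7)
The plan is to induct on the length $l(\lambda)$, after first disposing of the one-part case. The main tools are Newton's formula together with Lemmas \ref{comp1} and \ref{comp2}, each of which expresses a product of two $m$'s as a sum containing $m_\lambda$ (up to a scalar) plus other monomial symmetric polynomials whose indexing partitions have either strictly smaller length or strictly smaller leading multiplicity than $\lambda$.

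First I claim $m_{(s)} \in I$ for every $s \geq d+1$. For $d+1 \leq s \leq d+n$ this is by the definition of $I$. For $s > d+n$, Newton's formula writes $m_{(s)}$ as an $\rsp$-linear combination of $m_{(s-1)}, \ldots, m_{(s-n)}$, each of which has index still at least $s - n \geq d+1$ and thus lies in $I$ by induction on $s$.

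I now induct on $l(\lambda)$; the base case $l(\lambda) = 1$ is the preceding paragraph. For $l(\lambda) \geq 2$, write $\lambda = (\lik)$. If $m_1 \geq 2$, set $\rho = (\lambda_1^{m_1-1}, \lambda_2^{m_2}, \ldots, \lambda_k^{m_k})$, so that $l(\rho) = l(\lambda) - 1 \leq n - 1$, and apply Lemma \ref{comp2} with $s = 1$:
\[m_{(\lambda_1)} m_\rho = m_1 \, m_\lambda + \sum_\alpha a_\alpha m_\alpha,\]
each $\alpha$ satisfying $lp(\alpha) \geq \lambda_1$, $lm(\alpha) = 1$, and $l(\alpha) = l(\lambda) - 1$ (the third sum of the lemma vanishes because $lm < 1$ is impossible for a partition). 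Since $m_1 \in \mathbb{F}^\times$ one may solve for $m_\lambda$: the factor $m_{(\lambda_1)}$ lies in $I$ by the first step, and each $m_\alpha$ does by the inductive hypothesis. If instead $m_1 = 1$, then $k \geq 2$, and Lemma \ref{comp1} gives
\[m_{(\lambda_1)} m_{(\lambda_2^{m_2}, \ldots, \lambda_k^{m_k})} = m_\lambda + \sum_\alpha a_\alpha m_\alpha,\]
with each $\alpha$ having $lp(\alpha) \geq \lambda_1$, $lm(\alpha) = 1$, and $l(\alpha) < l(\lambda)$; the third sum is empty for the same reason, and one concludes identically.

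The main subtlety lies in the error sums $\sum b_\nu m_\nu$ appearing in both lemmas: those $\nu$-terms have leading multiplicity strictly below $m_1$ (respectively $s$) and are therefore not guaranteed to be invertible, so the inductive hypothesis cannot be applied to them and a naive induction would fail to close. The resolution is to use Lemma \ref{comp2} specifically with $s = 1$ and Lemma \ref{comp1} specifically with $m_1 = 1$: the constraint $lm < 1$ becomes vacuous for partitions, those sums disappear, and only the terms with leading multiplicity exactly $1$ remain, which are always covered by the inductive hypothesis.
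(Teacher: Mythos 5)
Your proof is correct and follows essentially the same route as the paper's: Newton's formula for the length-one base case, then Lemma \ref{comp2} with $s=1$ for the inductive step, observing that the error sum indexed by $lm(\nu)<1$ is vacuous and the surviving terms have strictly smaller length and leading multiplicity $1$. The only difference is that you split off the case $m_1=1$ and handle it via Lemma \ref{comp1} rather than \ref{comp2} (since the second factor $(\lambda_1^{m_1-1},\lambda_2^{m_2},\ldots)$ no longer has leading part $\lambda_1$ when $m_1=1$, so comp2 does not literally apply); this is a small but genuine tightening of the paper's argument, which cites comp2 uniformly.
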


The last result of this section is an immediate consequence of
Proposition \ref{AR}. It is part of Adem and Reichstein's main theorem
\cite[Thm.~5.1(a)]{MR2769082}.

\begin{coro}\label{char_zero_gens}
  If $p=0$ or $n<p$, then $I_{n,d}=(m_{(d+1)},\ldots,m_{(d+n)})$.
\end{coro}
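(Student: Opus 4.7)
The plan is to use Remark \ref{basis} together with Proposition \ref{AR} and verify that the invertibility hypothesis on $lm(\lambda)$ is automatic under either of the two assumptions on the characteristic.

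First I would establish the easy inclusion $I=(m_{(d+1)},\ldots,m_{(d+n)})\subseteq I_{n,d}$: each generator $m_{(d+i)}$ has leading part $d+i\geq d+1$, so by Remark \ref{basis} it lies in $I_{n,d}$.

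For the reverse inclusion, Remark \ref{basis} tells us that $I_{n,d}$ is generated as an $\rsp$-ideal by $\{m_\lambda\mid lp(\lambda)\geq d+1\}$, so it suffices to show every such $m_\lambda$ belongs to $I$. Proposition \ref{AR} does exactly this provided $lm(\lambda)\in\mathbb{F}^\times$. I would therefore argue that the hypotheses of the corollary force $lm(\lambda)$ to be a nonzero element of $\mathbb{F}$. Writing $\lambda=(\lik)$, the leading multiplicity is the positive integer $m_1$, and it satisfies $m_1\leq l(\lambda)\leq n$ since $\lambda\in\mathcal{L}_{\leq n}$. If $p=0$, any positive integer is invertible in $\mathbb{F}$. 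If $n<p$, then $m_1\leq n<p$, so $m_1$ is a positive integer strictly less than $p$ and is hence invertible in $\mathbb{F}$. In both cases Proposition \ref{AR} applies and gives $m_\lambda\in I$.

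There is no substantive obstacle here: the corollary is essentially an immediate packaging of Proposition \ref{AR} once one observes the elementary bound $lm(\lambda)\leq n$, which is what makes the two numerical hypotheses $p=0$ and $n<p$ sufficient to force every relevant leading multiplicity to be a unit in $\mathbb{F}$.
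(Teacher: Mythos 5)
Your proof is correct and follows essentially the same route as the paper: reduce via Remark \ref{basis} to the monomial symmetric generators, invoke Proposition \ref{AR}, and verify that the bound $lm(\lambda)\leq l(\lambda)\leq n$ forces $lm(\lambda)$ to be a unit in $\mathbb{F}$ under either hypothesis. The only difference is that you also spell out the easy inclusion $I\subseteq I_{n,d}$, which the paper leaves implicit.
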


\section{Generators in positive characteristic}
\label{sec:generators-general}

As Adem and Reichstein already noticed, the polynomials
$m_{(d+1)},\ldots,m_{(d+n)}$ alone do not generate $I_{n,d}$ when
$n\geq p$. For $n\leq 2p-1$, the two authors showed that taking
$m_{((d+1)^p)}$ in addition to $m_{(d+1)},\ldots,m_{(d+n)}$ is enough
to generate $I_{n,d}$ \cite[Thm.~5.1(b)]{MR2769082}.  The goal of this
section is to describe a set of generators of $I_{n,d}$ in arbitrary
positive characteristic. Throughout this section, $p>0$ will be
assumed.

The following is a generalization of Newton's identities.

\begin{lemma}\label{newton-like}
  Consider $h\leq n$ and let
  $q\in\mathbb{N}_{>0}$ be such that $n=q h+r$ with $0\leq r<
  h$. For every $s\in\mathbb{N}_{>0}$,
  \[\sum_{j=1}^q (-1)^{j-1} m_{(1^{jh})}
    m_{((s+q+1-j)^{h})}=m_{((s+q+1)^{h})}
    +\sum_{\substack{\lp(\mu) > s+1\\\lm(\mu)<h}} a_\mu m_{\mu},\]
  for some $a_\mu\in\mathbb{F}$.
\end{lemma}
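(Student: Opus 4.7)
The plan is to expand each product on the left hand side using the multiplication rule for monomial symmetric polynomials and then to check that, in the alternating sum, all contributions of leading multiplicity $\geq p^i$ telescope, leaving only $m_{((s+q+1)^{p^i})}$ together with a tail of partitions of leading multiplicity $< p^i$. The hypothesis $n = qp^i + r$ with $r < p^i$ enters only at the very end, as a length bound that annihilates the last would-be boundary term.

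For fixed $j \in \{1,\ldots,q\}$, I would write $\alpha$ for a permutation of $(1^{jp^i}, 0^{n-jp^i})$ and $\beta$ for a permutation of $((s+q+1-j)^{p^i}, 0^{n-p^i})$. Because $s \geq 1$ and $j \leq q$, the three values $s+q+2-j > s+q+1-j > 1$ are genuinely distinct, so the partition $\alpha + \beta$ is determined solely by the overlap $c := |\mathrm{supp}(\alpha) \cap \mathrm{supp}(\beta)| \in \{0,\ldots,p^i\}$ and equals
\[\nu^{(j,c)} = ((s+q+2-j)^c,(s+q+1-j)^{p^i-c},1^{jp^i-c}).\]
A position-by-position inspection shows that each $\nu^{(j,c)}$ appears with coefficient exactly $1$ in the product.

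I would then split the sum by leading multiplicity. For $0 < c < p^i$, the partition $\nu^{(j,c)}$ has $lm = c < p^i$ and $lp = s+q+2-j \geq s+2 \geq s+1$, so these terms feed directly into the remainder sum on the right. At the endpoints, the identity $\nu^{(j,0)} = \nu^{(j+1,p^i)}$ pairs the $c = 0$ contribution of the $j$-th product (sign $(-1)^{j-1}$) with the $c = p^i$ contribution of the $(j+1)$-st product (sign $(-1)^j$), and they cancel. Only two boundary terms survive: $\nu^{(1,p^i)} = ((s+q+1)^{p^i})$ with coefficient $+1$, which is the claimed leading term; and the putative $\nu^{(q,0)} = ((s+1)^{p^i},1^{qp^i})$, whose length $(q+1)p^i$ exceeds $qp^i + r = n$, so $m_{\nu^{(q,0)}} = 0$.

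The main obstacle I anticipate is precisely this closing step: without the hypothesis $r < p^i$, the telescope would leave a stray term $(-1)^{q-1}m_{((s+1)^{p^i},1^{qp^i})}$ of leading multiplicity $p^i$, destroying the conclusion. Once it is observed that $r < p^i$ forces $l(\nu^{(q,0)}) > n$, the remaining work is uniform bookkeeping, and the stated identity falls out.
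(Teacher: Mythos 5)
Your proposal is correct and follows essentially the same route as the paper's own proof: expand each product $m_{(1^{jp^i})}\,m_{((s+q+1-j)^{p^i})}$, isolate the two contributions of leading multiplicity $p^i$ (your endpoints $c=0$ and $c=p^i$), observe that the alternating sum telescopes via $\nu^{(j,0)}=\nu^{(j+1,p^i)}$, and use $r<p^i$ to kill the final boundary term $((s+1)^{p^i},1^{qp^i})$ because its length $(q+1)p^i$ exceeds $n$. Your introduction of the overlap parameter $c$ simply makes explicit the bookkeeping that the paper carries out more tersely, and your additional observation that all coefficients equal $1$ (valid since $s\geq 1$ forces $s+q+2-j$, $s+q+1-j$, $1$, $0$ to be pairwise distinct) is a modest strengthening that is not strictly needed for the terms of leading multiplicity $<p^i$, which are absorbed into the error sum regardless of coefficient.
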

\begin{proof}
  When $j\in\{1,\ldots,q-1\}$, the product
  $m_{(1^{jh})} m_{((s+q+1-j)^{h})}$ expands to
  \[m_{((s+q+2-j)^{h},1^{(j-1)h})}+m_{((s+q+1-j)^{h},1^{jh})}+\sum_{\substack{\lp(\mu)=
        s+q+2-j\\\lm(\mu)<h}} a_{\mu,j} m_{\mu},\]
  for some $a_{\mu,j}\in\mathbb{F}$.  When $j=q$, the same product
  becomes $m_{(1^{qh})} m_{((s+1)^{h})}$ which expands to
  \[m_{((s+2)^{h},1^{(q-1)h})}+\sum_{\substack{\lp(\mu) =
        s+2\\\lm(\mu)<h}} a_{\mu,q} m_{\mu},\]
  for some $a_{\mu,q}\in\mathbb{F}$. Summing over $j$ with alternating
  signs, all monomial symmetric polynomials with leading multiplicity
  $h$ cancel out except for $m_{((s+q+1)^{h})}$ which occurs when
  $j=1$.  The remaining terms can be combined in a single summation to
  give the formula above.
\end{proof}

\begin{prop}\label{iplusj}
  Let $i\in\mathbb{N}$ be such that $p^i\leq n$ and let
  $q\in\mathbb{N}_{>0}$ be such that $n=qp^i+r$ with $0\leq r<
  p^i$. Let $K$ be an ideal of $\rsp$ such that
  \[K \supseteq \{m_\lambda\mid \lambda \in \mathcal{L}_{\leq n},
    \lp(\lambda)\geq d+1, \lm(\lambda) <p^i\}.\]
  If $J = (m_{((d+1)^{p^i})},\ldots,m_{((d+q)^{p^i})})$, then $J+K$
  contains every $m_\lambda$ with $\lp(\lambda)\geq d+1$ and
  $\lm(\lambda)\leq p^i$.
\end{prop}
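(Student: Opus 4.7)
The plan is to split the statement on the value of $lm(\lambda)$. If $lm(\lambda)<p^i$, then $m_\lambda\in I$ by the hypothesis on $I$ and there is nothing to prove. Assume therefore $lm(\lambda)=p^i$, so that $l(\lambda)\geq p^i$, and proceed by induction on $l(\lambda)$.

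For the base case $l(\lambda)=p^i$, write $\lambda=(\lambda_1^{p^i})$ with $\lambda_1\geq d+1$. If $\lambda_1\leq d+q$, then $m_{(\lambda_1^{p^i})}$ is a generator of $J$, so we are done. If $\lambda_1\geq d+q+1$, I would run a secondary induction on $\lambda_1$, applying Lemma \ref{newton-like} with $s=\lambda_1-q-1\geq d$; rearranging the formula gives
\[m_{(\lambda_1^{p^i})}=\sum_{j=1}^q (-1)^{j-1} m_{(1^{jp^i})} m_{((\lambda_1-j)^{p^i})}-\sum_{\substack{lp(\mu)\geq \lambda_1-q\\lm(\mu)<p^i}} a_\mu m_\mu.\]
The correction sum lies in $I$ because $\lambda_1-q\geq d+1$ and $lm(\mu)<p^i$. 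In the main sum, each factor $m_{((\lambda_1-j)^{p^i})}$ has leading part in $\{\lambda_1-q,\ldots,\lambda_1-1\}\subseteq\{d+1,\ldots,\lambda_1-1\}$, so these factors already lie in $I+J$ by the secondary inductive hypothesis.

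For the inductive step, write $\lambda=(\lambda_1^{p^i},\lambda_2^{m_2},\ldots,\lambda_k^{m_k})$ with $k>1$ and apply Lemma \ref{comp1} to the product $m_{(\lambda_1^{p^i})}\cdot m_{(\lambda_2^{m_2},\ldots,\lambda_k^{m_k})}$, which reads
\[m_{(\lambda_1^{p^i})} m_{(\lambda_2^{m_2},\ldots,\lambda_k^{m_k})}=m_\lambda+\sum_{\substack{lp(\mu)\geq\lambda_1\\lm(\mu)=p^i\\l(\mu)<l(\lambda)}} a_\mu m_\mu+\sum_{\substack{lp(\nu)\geq\lambda_1\\lm(\nu)<p^i}} b_\nu m_\nu.\]
The left-hand side lies in $I+J$ because $m_{(\lambda_1^{p^i})}\in I+J$ by the base step. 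Every $m_\mu$ in the first sum lies in $I+J$ by the main inductive hypothesis (same leading multiplicity, strictly smaller length), and every $m_\nu$ in the second sum lies in $I$ by the hypothesis on $I$. Solving for $m_\lambda$ finishes the proof.

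The main obstacle is the base step: one must check that the Newton-like recursion of Lemma \ref{newton-like} does not introduce terms outside $I+J$. The crucial feature exploited is that all correction terms produced by that lemma have leading multiplicity strictly less than $p^i$, which is precisely what the hypothesis on $I$ was designed to absorb; the inductive step is then essentially a copy of the argument in proposition \ref{AR}, with $p^i$ playing the role of the invertible leading multiplicity.
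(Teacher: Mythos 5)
Your proof is correct and takes essentially the same route as the paper's own argument: the same split on $lm(\lambda)<p^i$ versus $lm(\lambda)=p^i$, the same outer induction on $l(\lambda)$, the same use of Lemma \ref{newton-like} in the base case and Lemma \ref{comp1} in the inductive step. Your write-up is a bit more explicit in the base case, where you make the secondary induction on $\lambda_1$ (starting from the generators of $J$ at $\lambda_1\in\{d+1,\dots,d+q\}$) formal, whereas the paper simply says the larger leading parts are obtained by applying the Newton-like formula recursively from $s=d$.
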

\begin{proof}
  If $\lambda\in\mathcal{L}_{\leq n}$ has $\lp(\lambda)\geq d+1$ and
  $\lm(\lambda) <p^i$, then, by the hypothesis,
  $m_\lambda \in K\subseteq J+K$.
	
  Suppose $\lp (\lambda) \geq d+1$ and $\lm(\lambda) =p^i$. The proof
  that $m_\lambda\in J+K$ is by induction on $l(\lambda)$. For the
  base case, assume $l(\lambda) =p^i$, i.e.,
  $m_\lambda =m_{(\lambda_1^{p^i})}$ for some $\lambda_1\geq d+1$. By
  definition, $m_{(\lambda_1^{p^i})} \in J\subseteq J+K$, when
  $d+1 \leq \lambda_1 \leq d+q$.  If $\lambda_1 > d+q$, then we may
  write $\lambda_1 = s+q+1$, for some $s\geq d$.  Applying Lemma
  \ref{newton-like} with $h=p^i$,
  \[\sum_{j=1}^q (-1)^{j-1} m_{(1^{jp^i})}
    m_{((s+q+1-j)^{p^i})}=m_{((s+q+1)^{p^i})}
    +\sum_{\substack{\lp(\mu) > s+1\\\lm(\mu)<p^i}} a_\mu m_{\mu},\]
  for some $a_\mu\in\mathbb{F}$.  The summation on the right is in
  $K\subseteq J+K$ by the hypothesis.  Proceeding by induction on $s$,
  the left hand side belongs to $J+K$.  Hence
  $m_{((s+q+1)^{p^i})}\in J+K$.
	
  For the inductive step, assume
  $\lambda=(\lambda_1^{p^i}, \lambda_2^{w_2}, \ldots,
  \lambda_k^{w_k})$
  with $l(\lambda) >p^i$ and $\lambda_1\geq d+1$. By Lemma
  \ref{comp1},
  \[m_{(\lambda_1^{p^i})} m_{(\lambda_2^{w_2}, \ldots,
      \lambda_k^{w_k})}=m_\lambda +\sum_{\substack{\lp(\mu) >
        \lambda_1\\\lm(\mu)=p^i\\l(\mu)<l(\lambda)}} a_\mu
    m_{\mu}+\sum_{\substack{\lp(\nu) > \lambda_1\\\lm(\nu)<p^i}} b_\nu
    m_{\nu},\]
  for some $a_\mu,b_\nu\in\mathbb{F}$. By the base case,
  $m_{(\lambda_1^{p^i})}\in J+K$. On the right hand side, the first
  summation belongs to $J+K$ by the inductive hypothesis and the
  second one belongs to $K\subseteq J+K$ by the hypothesis. Thus
  $m_\lambda\in J+K$.
\end{proof}

We are now ready to exhibit a set of generators for the ideal
$I_{n,d}$.

\begin{teo}\label{gens}
  Let $t=\max\{i\in\mathbb{N}\mid p^i\leq n\}$ and let
  $q_0,\ldots,q_t\in\mathbb{N}_{>0}$ be such that,
  $\forall i\in\{0,\ldots,t\}$, $n=q_i p^i+r_i$ with $0\leq
  r_i<p^i$. For every $i\in\{0,\ldots,t\}$, define an ideal of $\rsp$
  \[J_{n,d,i}=(m_{((d+1)^{p^i})},\ldots,m_{((d+q_i)^{p^i})}).\]
  Then $I_{n,d}=J_{n,d,0}+J_{n,d,1}+\ldots +J_{n,d,t}$.
\end{teo}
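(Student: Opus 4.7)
The plan is to establish $I_{(t)} = I_{n,d}$ by proving both inclusions. The containment $I_{(t)} \subseteq I_{n,d}$ is immediate: every generator $m_{((d+j)^{p^i})}$ has leading part $d+j \geq d+1$, so remark \ref{basis} puts it in $I_{n,d}$. For the reverse inclusion, remark \ref{basis} further reduces the task to showing $m_\lambda \in I_{(t)}$ for every $\lambda \in \mathcal{L}_{\leq n}$ with $lp(\lambda) \geq d+1$.

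I would deduce this from the stratified claim that, for each $i \in \{0, 1, \ldots, t\}$, the ideal $I_{(i)}$ contains every such $m_\lambda$ with $lm(\lambda) < p^{i+1}$. Setting $i = t$ then finishes the argument, because the maximality of $t$ gives $n < p^{t+1}$, and any $\lambda \in \mathcal{L}_{\leq n}$ satisfies $lm(\lambda) \leq l(\lambda) \leq n < p^{t+1}$. The base case $i = 0$ is proposition \ref{AR}: since $q_0 = n$, one has $J_{(0)} = I_{(0)} = (m_{(d+1)}, \ldots, m_{(d+n)})$, and any $\lambda$ with $lm(\lambda) < p$ automatically has $lm(\lambda) \in \mathbb{F}^\times$.

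For the inductive step, proposition \ref{iplusj} applied with $I = I_{(i-1)}$ already upgrades the covered range from $lm < p^i$ to $lm \leq p^i$ inside $I_{(i)} = I_{(i-1)} + J_{(i)}$. The remaining cases, $p^i < lm(\lambda) < p^{i+1}$, I would handle by a secondary induction on $m_1 = lm(\lambda)$. Writing $\lambda = (\lambda_1^{m_1}, \lambda_2^{m_2}, \ldots, \lambda_k^{m_k})$ and setting $\mu = (\lambda_1^{m_1 - p^i}, \lambda_2^{m_2}, \ldots, \lambda_k^{m_k})$, lemma \ref{comp2} (whose size constraint $p^i + l(\mu) \leq n$ is just $l(\lambda) \leq n$) produces
\[m_{(\lambda_1^{p^i})}\, m_\mu = \binom{m_1}{p^i}\, m_\lambda + \sum_{\substack{lp(\mu')\geq\lambda_1\\lm(\mu')=p^i\\l(\mu')=l(\mu)}} a_{\mu'} m_{\mu'} + \sum_{\substack{lp(\nu)\geq\lambda_1\\lm(\nu)<p^i}} b_\nu m_\nu.\]
The factor $m_{(\lambda_1^{p^i})}$ lies in $I_{(i)}$ by proposition \ref{iplusj}; the factor $m_\mu$ has $lm(\mu) = m_1 - p^i < m_1$ and lies in $I_{(i)}$ either by the outer inductive hypothesis (if $m_1 - p^i < p^i$), by proposition \ref{iplusj} (if $m_1 - p^i = p^i$), or by the secondary induction (if $m_1 - p^i > p^i$); and the extra summands on the right all have $lm \leq p^i$ and are already in $I_{(i)}$. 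Lucas's theorem gives $\binom{m_1}{p^i} \equiv c_i \pmod p$, where $c_i \in \{1, \ldots, p-1\}$ is the base-$p$ digit of $m_1$ at position $i$, so this coefficient is a unit in $\mathbb{F}$ and one can solve for $m_\lambda$.

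The main obstacle is orchestrating the nested induction so as to bridge the gap between $lm = p^i$, which is delivered cleanly by proposition \ref{iplusj}, and the threshold $lm = p^{i+1}$, at which level the next wave of generators takes over. The arithmetic that makes the bridge work is precisely the observation that the range $p^i \leq m_1 < p^{i+1}$ forces the $i$-th base-$p$ digit of $m_1$ to be a nonzero unit mod $p$, which Lucas's theorem then hands us as the invertible leading coefficient in the key identity.
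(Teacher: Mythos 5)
Your proof is correct and follows essentially the same route as the paper's: induction on $i$, proposition \ref{iplusj} to cover the range $lm(\lambda)\leq p^i$, then a single application of lemma \ref{comp2} together with Lucas' theorem to bridge $p^i<lm(\lambda)<p^{i+1}$. The only deviation is the secondary induction on $m_1$ you insert to verify $m_\mu\in I_{(i)}$, and that step is superfluous: since $I_{(i)}$ is an ideal and $m_{(\lambda_1^{p^i})}\in I_{(i)}$ already, the product $m_{(\lambda_1^{p^i})}\,m_\mu$ lies in $I_{(i)}$ regardless of where $m_\mu$ lives, which is exactly the observation the paper relies on.
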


\begin{ex}\label{ex271}
  Let $p=2$, $n=7$, and $d=1$. Then
  \begin{align*}
    &J_{7,1,0} = (m_{(2)},m_{(3)},m_{(4)},m_{(5)},m_{(6)},m_{(7)},m_{(8)}),\\
    &J_{7,1,1} = (m_{(2^2)},m_{(3^2)},m_{(4^2)}),\\
    &J_{7,1,2} = (m_{(2^4)}),
  \end{align*}
  and $I_{7,1} = J_{7,1,0} + J_{7,1,1} + J_{7,1,2}$.
\end{ex}

\begin{proof}
  Define auxiliary ideals by setting
  \[K_i :=J_{n,d,0}+ J_{n,d,1} +\ldots +J_{n,d,i},\]
  for $i\in\{0,\ldots,t\}$. With this notation, the thesis becomes
  $I_{n,d} = K_t$.  It is clear that
  $K_0 \subseteq K_1 \subseteq\ldots\subseteq K_t$.
  Moreover, $K_t \subseteq I_{n,d}$ by Proposition
  \ref{span}\ref{item:1}.  In light of Proposition
  \ref{span}\ref{item:2}, the inclusion $I_{n,d}\subseteq K_t$
  will follow if one can show that $K_t$ contains every polynomial
  $m_\lambda$ with $\lp(\lambda)\geq d+1$.

  \underline{Claim}: $\forall i\in\{0,\ldots,t\}$, the inclusion
  \begin{equation*}
    K_i \supseteq \{m_\lambda\mid \lambda \in \mathcal{L}_{\leq n},
    \lp(\lambda)\geq d+1, \lm(\lambda) <p^{i+1}\}
  \end{equation*}
  holds.

  The claim will be proven by induction on $i$. For $i=0$, note that
  \begin{equation*}
    K_0 =J_{n,d,0}=(m_{(d+1)},\ldots,m_{(d+n)}).
  \end{equation*}
  Hence the claim is an immediate consequence of Proposition \ref{AR}.

  Assume $i>0$. By the inductive hypothesis,
  \[K_{i-1}\supseteq \{m_\lambda\mid \lambda \in \mathcal{L}_{\leq
      n}, \lp(\lambda)\geq d+1, \lm(\lambda) <p^{i}\}.\]
  By definition, $K_{i}=J_{n,d,i} +K_{i-1}$. Hence, by
  Proposition \ref{iplusj},
  \begin{equation}\label{star}
    K_{i}\supseteq \{m_\lambda\mid \lambda \in \mathcal{L}_{\leq n},
    \lp(\lambda)\geq d+1, \lm(\lambda) \leq p^{i}\}.
  \end{equation}

  It remains to show $K_{i}$ contains all monomial symmetric
  polynomials $m_\lambda$ with $p^i<\lm(\lambda)<p^{i+1}$. Let
  $\lambda=(\lik)\in\mathcal{L}_{\leq n}$ with $\lambda_1\geq d+1$ and
  $p^i<w_1<p^{i+1}$. The base $p$ expansion of $w_1$ is
  \[w_1 = h_i p^i + h_{i-1} p^{i-1} +\ldots + h_1 p+ h_0,\]
  where $0\leq h_j<p$, $\forall j\in\{0,\ldots,i\}$, and $h_i\neq 0$.
  Set $z= h_{i-1} p^{i-1} +\ldots + h_0$, so that $w_1=h_i p^i+z$. By
  Lemma \ref{comp2},
  \[m_{(\lambda_1^{p^i})}
    m_{(\lambda_1^{(h_i-1)p^i+z},\lambda_2^{w_2},\ldots,\lambda_k^{w_k})}=\binom{w_1}{p^i}
    m_\lambda+\sum_{\substack{\lp(\mu) >
        \lambda_1\\\lm(\mu)=p^i\\l(\mu)=l(\lambda)}} a_\mu
    m_{\mu}+\sum_{\substack{\lp(\nu) > \lambda_1\\\lm(\nu)<p^i}} b_\nu
    m_{\nu},\]
  for some $a_\mu,b_\nu\in\mathbb{F}$. By (\ref{star}),
  $m_{(\lambda_1^{p^i})}$ and the two summations on the right hand
  side all belong to $K_{i}$. Moreover, by Lucas' theorem,
  \[ \binom{w_1}{p^i}\equiv \binom{h_i}{1}
    \prod_{j=0}^{i-1}\binom{h_j}{0} \equiv h_i\pmod p.\]
  Since $0<h_i<p$, $\binom{w_1}{p^i}\in\mathbb{F}^\times$; therefore
  $m_\lambda\in K_{i}$. This concludes the proof of the claim.

  When $i=t$, the claim gives
  \[K_{t}\supseteq \{m_\lambda\mid \lambda \in \mathcal{L}_{\leq n},
    \lp(\lambda)\geq d+1, \lm(\lambda) <p^{t+1}\}.\]
  If $\lambda\in\mathcal{L}_{\leq n}$ and $\lp(\lambda)\geq d+1$, the
  inequality $\lm(\lambda)\leq l(\lambda)\leq n<p^{t+1}$ implies
  $m_\lambda\in K_{t}$. Therefore $I_{n,d}=K_t$.
\end{proof}

\section{A conjecture for minimal generators}
\label{sec:conj-minim-gener}
In this section, I consider the problem of describing minimal
generators of the ideal $I_{n,d}$. When $p=0$ or $p>n!$, a solution
was already given by Adem and Reichstein (see
\cite[Thm.~6.1(a)]{MR2769082} and \cite[Lemma~6.2(c)]{MR2769082});
namely, the generators of Corollary \ref{char_zero_gens} are minimal.

Theorem \ref{gens} describes a set of generators of
$I_{n,d}$ for arbitrary $p>0$. In general, this set is not minimal.

\begin{ex}
  Let $p=2$, $n=7$ and $d=1$. Refer to Example \ref{ex271} for a
  generating set of $I_{7,1}$. Notice that
  \begin{equation*}
    (m_{(2)})^2 = \left( \sum_{i=1}^7 x_i^2 \right)^2 =
    \sum_{i=1}^7 x_i^4 = m_{(4)},
  \end{equation*}
  making $m_{(4)}$ redundant as a generator. Similarly, $m_{(6)}$ and
  $m_{(8)}$ can be discarded because $m_{(6)} = (m_{(3)})^2$ and
  $m_{(8)} = (m_{(4)})^2$.

  A similar computation yields
  \begin{equation*}
    (m_{(2^2)})^2 = \left( \sum_{1\leq i< j\leq 7} x_i^2 x_j^2 \right)^2 =
    \sum_{1\leq i<j \leq 7} x_i^4 x_j^4 = m_{(4^2)}.
  \end{equation*}
  Therefore $m_{(4^2)}$ is also redundant.
\end{ex}

In general, when $p>0$, we have
\begin{equation}\label{relation}
  (m_{(s^{k})})^p=m_{((ps)^{k})}.
\end{equation}
This observation allows to trim the generating set of Theorem
\ref{gens}, leading to the following.

\begin{conj}
  Let $t=\max\{i\in\mathbb{N}\mid p^i\leq n\}$ and let
  $q_0,\ldots,q_t\in\mathbb{N}_{>0}$ be such that,
  $\forall i\in\{0,\ldots,t\}$, $n=q_i p^i+r_i$ with $0\leq
  r_i<p^i$. For every $i\in \{0,\ldots,t\}$, define the set
  \[\mathcal{J}_{n,d,i} = \left\{m_{((d+h)^{p^i})} \middle\arrowvert
      h\in\{1,\ldots,q_i\}\text{ and } \forall
      j\in\{1,\ldots,h\}\ d+h\neq p(d+j)\right\}.\]
  Then
  $\mathcal{J}_{n,d,0} \cup \mathcal{J}_{n,d,1} \cup \ldots \cup
  \mathcal{J}_{n,d,t}$ is a minimal set of generators of $I_{n,d}$.
\end{conj}

Essentially, this conjecture claims that the only possible relations
among the generators of Theorem \ref{gens} are of the kind given in
equation \eqref{relation}.  For small values of $p$, $n$ and $d$, the
conjecture has been verified using the software Macaulay2
\cite{M2}.

\section{Acknowledgements}
\label{sec:acknowledgements}

I am indebted to the anonymous referee, whose useful comments and
suggestions helped improve the quality of this manuscript.  While
working on this project, I was partially supported by an NSERC grant.

\end{document}